\renewcommand{\phi}[0]{\varphi}
\renewcommand{\theta}[0]{\vartheta}
\renewcommand{\epsilon}[0]{\varepsilon}
\newcommand{\N}{\text{$\mathbf{N}$}}
\newcommand{\R}{\text{$\mathbf{R}$}}
\newtheorem{theorem}{Theorem}[section]
\newtheorem{lemma}[theorem]{Lemma}
\theoremstyle{definition}
\newtheorem{definition}[theorem]{Definition}
\newtheorem{example}[theorem]{Example}
\theoremstyle{remark}
\newtheorem{remark}[theorem]{Remark}
\numberwithin{equation}{section}
\begin{document}

\bibliographystyle{amsplain}

\date{}
\keywords{Digital problems, radix representation, semigroups}
\subjclass[2010]{11A05, 11A63, 20M14}

\title[]
{Multiplicative subsemigroups of the positive integers closed with respect to the number of digits}

\author{S.~Ugolini}
\email{sugolini@gmail.com} 

\begin{abstract}
In this paper we present an alternative approach to a problem dealt with by Rosales et al. In particular, once a base $b$ for the representation of the integers is fixed, we describe a procedure for constructing the smallest multiplicative subsemigroup of the positive integers closed with respect to the number of digits which contains a set of positive integers.
\end{abstract}

\maketitle

\section{Introduction}
In this paper we focus on certain subsemigroups of the multiplicative semigroup $(\N^*, \cdot)$ of the positive integers. 

Let $b$ be a positive integer not smaller than $2$. We say that a subsemigroup $G \subseteq (\N^*, \cdot)$ is a $b$-dc-semigroup if $G$ is closed with respect to the number of digits. This latter is equivalent to saying that if $x \in G$ and $({x_{n-1} x_{n-2} \dots x_0})_{b}$ is its base-$b$ representation with $x_{n-1} \not = 0$, then $G$ contains all the integers  $({y_{n-1} y_{n-2} \dots y_0})_b$ with $y_{n-1} \not = 0$. 

\begin{example}
Let $b =2$ and $G$ a $2$-dc-semigroup. If $101_2 \in G$, then
\begin{equation*}
\{100_2, 101_2, 110_2, 111_2 \} \subseteq G.
\end{equation*}
\end{example}  

Now we introduce some notations, we will use repeatedly along the paper.

\begin{definition}
If $y$ and $z$ are two non-negative integers such that $y < z$, then 
\begin{eqnarray*}
[y, z[_{\N} & = & \{x \in \R: y \leq x < z \} \cap \N; \\
\left[y,+\infty \right[_{\N} & = & \{x \in \R: y \leq x < + \infty \} \cap \N.
\end{eqnarray*}

If $b$ is an integer not smaller than $2$, $i \in \N$ and $j \in \N^*$, then 
\begin{eqnarray*}
I_b (i,j) & = & [b^i, b^{i+j}[_{\N};\\
I_b (i, + \infty) & = & [b^i, + \infty[_{\N}.
\end{eqnarray*} 
\end{definition}

The strings $(x_{n-1} x_{n-2} \dots x_0)_b$ with $x_{n-1} \not = 0$, namely the base-$b$ strings of length $n$,  describe all positive integers in $[b^{n-1}, b^n[_{\N}$. Therefore, any $b$-dc-semigroup $G$ can be expressed as
\begin{equation*}
G = \bigcup_{j \in J} I_b (j, 1),
\end{equation*}  
for some index set $J \subseteq \N$.

After a preliminary section, where  some technical results are proved, in Section \ref{sec_scpi} we describe a procedure for constructing the smallest $b$-dc-semigroup of $(\N^*, \cdot)$ which contains a set of positive integers. This latter problem was raised and solved in \cite{ros}, for $b=10$, relying upon a class of numerical semigroups \cite{ros_ns}, called the $LD$-semigroups. In the present paper we would like to present an alternative approach to \cite{ros}, removing also the restriction to the base we choose to represent the integers.

\section{Preliminaries}\label{sec_prel}
We prove a preliminary  result for  $b$-dc-semigroups, where $b > 2$. The analogue result for $2$-dc-semigroups will be stated immediately afterwards. 

\begin{lemma}\label{prel_b}
Let $G$ be a $b$-dc-semigroup, for some integer $b > 2$. 
\begin{enumerate}
\item The set $U_b = \{I_b (i, k): i \in \N, k \in \N^* \}$ endowed with the operation
\begin{displaymath}
I_b (i, k) \cdot I_b (j, l) = I_b (i + j, k + l)
\end{displaymath}
is an abelian semigroup.
\item If $\{i, j \} \subseteq \N$, $\{k, l \} \subseteq \N^*$ and
\begin{eqnarray*}
I_b (i, k) & \subseteq & G,\\
I_b (j, l) & \subseteq & G,
\end{eqnarray*}
then 
\begin{eqnarray*}
I_b (i, k) \cdot I_b (j, l) \subseteq  G.
\end{eqnarray*}
\end{enumerate}
\end{lemma}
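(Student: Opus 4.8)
The plan is to handle the two assertions in turn; the first is a formality, while the second is where the hypothesis $b>2$ does real work.

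For (1), the key remark is that the map $(i,k)\mapsto I_b(i,k)$ is injective on $\N\times\N^*$: the set $I_b(i,k)=[b^i,b^{i+k}[_{\N}$ has least element $b^i$ and greatest element $b^{i+k}-1$, so $b^i$ and $b^{i+k}$, hence $i$ and $k$, are determined by the set alone. Thus the rule $I_b(i,k)\cdot I_b(j,l)=I_b(i+j,k+l)$ is unambiguous, and since $i+j\in\N$ and $k+l\in\N^*$ the value lies again in $U_b$, so $U_b$ is closed under the operation. Associativity and commutativity are then inherited from addition of the index pairs: $(i+j)+m=i+(j+m)$, $(k+l)+n=k+(l+n)$, $i+j=j+i$, $k+l=l+k$. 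This proves (1).

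For (2), since $G$ is a subsemigroup of $(\N^*,\cdot)$ it is closed under products, so the setwise product $P=\{\,xy:x\in I_b(i,k),\ y\in I_b(j,l)\,\}$ satisfies $P\subseteq G$. From $b^i\le x<b^{i+k}$ and $b^j\le y<b^{j+l}$ we get $b^{i+j}\le xy<b^{i+j+k+l}$, so $P\subseteq I_b(i+j,k+l)$, and by (1) it suffices to prove $I_b(i+j,k+l)\subseteq G$. Now $I_b(i+j,k+l)$ is the union of the length classes $I_b(m-1,1)=[b^{m-1},b^m[_{\N}$ for $i+j+1\le m\le i+j+k+l$, and for a $b$-dc-semigroup a single element of $I_b(m-1,1)$ lying in $G$ already forces $I_b(m-1,1)\subseteq G$; so it is enough to exhibit, for each such $m$, an element of $P$ with exactly $m$ base-$b$ digits.

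For $i+j+1\le m\le i+j+k+l-1$ I would pick integers $a,c$ with $a+c=m+1$, $i+1\le a\le i+k$ and $j+1\le c\le j+l$ — e.g. $a=\max(i+1,\,m+1-j-l)$, which satisfies $a\le\min(i+k,\,m-j)$ thanks to the bounds on $m$ and to $k,l\ge1$ — and set $x=b^{a-1}\in I_b(i,k)$, $y=b^{c-1}\in I_b(j,l)$; then $xy=b^{a+c-2}=b^{m-1}$ has exactly $m$ digits. For the remaining value $m=i+j+k+l$ I would instead take the largest admissible factors $x=b^{i+k}-1\in I_b(i,k)$ and $y=b^{j+l}-1\in I_b(j,l)$, so that $xy=(b^{i+k}-1)(b^{j+l}-1)=b^{i+j+k+l}-b^{i+k}-b^{j+l}+1$, which I would check lies in $[b^{i+j+k+l-1},\,b^{i+j+k+l}[_{\N}$, i.e. has exactly $i+j+k+l$ digits. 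Combining the two cases, every length class is met, and the union yields $I_b(i+j,k+l)\subseteq G$.

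The only non-formal point — and the sole place $b>2$ enters — is the claim $xy=b^{i+j+k+l}-b^{i+k}-b^{j+l}+1\ge b^{i+j+k+l-1}$, equivalently $b^{i+j+k+l-1}(b-1)\ge b^{i+k}+b^{j+l}-1$. Since $i,j\ge0$ and $k,l\ge1$ we have $i+j+k+l-1\ge\max(i+k,\,j+l)$, whence $b^{i+j+k+l-1}(b-1)\ge (b-1)\,b^{\max(i+k,\,j+l)}\ge 2\,b^{\max(i+k,\,j+l)}\ge b^{i+k}+b^{j+l}$, using $b-1\ge2$. I expect this inequality, together with the routine verification that the chosen factors actually lie in the prescribed intervals, to be the whole of the work. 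For $b=2$ the estimate fails — the product of the two largest factors can be one digit short of $i+j+k+l$ — which is precisely why the base-$2$ case must be, and is, stated and treated separately.
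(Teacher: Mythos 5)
Your proof is correct, and while its overall skeleton matches the paper's (treat $I_b(i+j,k+l)$ one digit-length class at a time; hit every class of length at most $i+j+k+l-1$ with products of pure powers $b^{a-1}\cdot b^{c-1}$, which is exactly the paper's first step; then produce one special witness for the top class and let closure under number of digits do the rest), you handle the crucial top class differently. The paper takes the midpoints $c_{ik}=\tfrac{b^{i+k-1}+b^{i+k}}{2}$ and $c_{jl}=\tfrac{b^{j+l-1}+b^{j+l}}{2}$ and applies the AM--GM inequality to get $c_{ik}c_{jl}\ge b^{i+j+k+l-1}$, which forces a case analysis (these midpoints are only integers when $b$ is odd, or when $b$ is even and the exponents are at least $2$, so the cases $i+k=1$ and $j+l=1$ must be patched separately using $\tfrac b2$). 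You instead take the maximal elements $b^{i+k}-1$ and $b^{j+l}-1$ and verify $b^{i+j+k+l-1}(b-1)\ge b^{i+k}+b^{j+l}-1$ directly from $b-1\ge 2$ and $i+j+k+l-1\ge\max(i+k,j+l)$; this needs no integrality or parity considerations and so eliminates the paper's case splits entirely, while isolating exactly where $b>2$ is used (and your closing remark correctly identifies why the estimate fails for $b=2$, matching the paper's decision to treat that base in a separate lemma). Your part (1) also records the well-definedness of the operation (injectivity of $(i,k)\mapsto I_b(i,k)$), which the paper leaves implicit. Both arguments are sound; yours is the more streamlined route to the top digit class.
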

\begin{proof}
\begin{enumerate}[leftmargin=*]
\item The claim holds since $(\N, +) \times (\N^*, +)$ is a direct product of abelian semigroups and multiplying $I_b(i, k)$ by $I_b(j,l)$ amounts to adding componentwise the two non-negative integers upon which they depend.
 
\item We notice that, for any $\tilde{k} \in [0, k[_{\N}$ and any $\tilde{l} \in [0, l[_{\N}$, 
\begin{displaymath}
\begin{array}{l}
b^{i+ \tilde{k}} \in I_b (i, k),\\
b^{j+ \tilde{l}} \in I_b (j, l),
\end{array}
\end{displaymath}
and consequently
\begin{displaymath}
\begin{array}{l}
b^{i+j+\tilde{k}+\tilde{l}} \in I_b(i+j,k+l).
\end{array}
\end{displaymath}
Moreover, as $\tilde{k}$ varies in $[0, k[_{\N}$ and $\tilde{l}$ varies in $[0, l[_{\N}$, we get all the powers $b^{i+j+h}$, for $h \in [0,k+l-1[_{\N}$. Therefore, $I_b(i+j,k+l-1) \subseteq G$.

Now we prove that also $b^{i+j+k+l-1} \in G$. By so doing, we can conclude that $I_b(i+j,k+l) = I_b(i,k) \cdot I_b(j,l) \subseteq G$. 

We notice that if $b$ is odd or $b$ is even and $i + k \geq 2$, then
\begin{equation*}
c_{ik} = \dfrac{b^{i+k-1} + b^{i+k}}{2} \in I_b(i, k).
\end{equation*}
In a similar way, if $b$ is odd or $b$ is even and $j + l \geq 2$, then
\begin{equation*}
c_{jl} = \dfrac{b^{j+l-1} + b^{j+l}}{2} \in I_b(j, l).
\end{equation*}
Suppose that one of the following holds:
\begin{itemize}
\item $b$ is odd;
\item $b$ is even, $i + k \geq 2$ and $j + l \geq 2$.
\end{itemize}
Then, $c_{ik} \cdot c_{jl} \in G$.
Moreover, by the AM-GM inequality, we have that
\begin{equation*}
c_{ik} \cdot c_{jl} \geq \sqrt{b^{2i+2k-1}} \cdot \sqrt{b^{2j+2l-1}} = b^{i+j+k+l-1}.
\end{equation*}
Since $G$ is closed with respect to the number of digits, we conclude that $[b^{i+j+k+l-1}, b^{i+j+k+l}[_{\N} \subseteq G$.

To end with, we consider the remaining cases.
\begin{itemize}
\item $b$ is even, $i+k=1$ and $j + l \geq 2$. We notice that $i+k=1$ if and only if $i=0$ and $k=1$. 
Since $b \not = 2$ we have that $b \geq 4$, $\sqrt{b} \geq 2$ and $\frac{b}{2} \in I_b (0, 1)$. Therefore, $\frac{b}{2} \cdot c_{jl} \in G$ and
\begin{equation*}
\dfrac{b}{2} \cdot c_{jl} \geq \dfrac{b}{\sqrt{b}} \cdot \sqrt{b^{2j+2l-1}} = b^{j+l}. 
\end{equation*}
Since $b^{i+j+k+l-1} = b^{j+l}$, we get the result.
\item $b$ is even, $i+k \geq 2$ and $j + l = 1$. The result follows by means of the same argument as above, replacing $j$ with $i$ and $l$ with $k$ respectively.
\item $b$ is even, $i + k = 1$ and $j + l =1$. In this case, $i=j=0$ and $k=l=1$. Since $\frac{b}{2} \in I_b (0, 1)$ we have that $\frac{b}{2} \cdot \frac{b}{2} \in G$. Moreover,
\begin{equation*}
\dfrac{b}{2} \cdot \dfrac{b}{2} \geq \dfrac{b}{\sqrt{b}} \cdot \dfrac{b}{\sqrt{b}} = b.
\end{equation*}
Since $b^{i+j+k+l-1} = b$, we get the result.
\end{itemize}
\end{enumerate}
\end{proof}

\begin{lemma}\label{prel_2}
Let $G$ be a $2$-dc-semigroup.   
\begin{enumerate}
\item The set $U_2 = \{I_2 (i, k): i \in \N, k \in \N^* \}$ endowed with the operation
\begin{displaymath}
I_2 (i, k) \cdot I_2 (j, l) =
\begin{cases}
I_2 (j, l) & \text{if $i = 0$ and $k = 1$,}\\
I_2 (i, k) & \text{if $j = 0$ and $l = 1$,}\\
I_2 (i + j, k + l) & \text{otherwise,}
\end{cases}
\end{displaymath}
is an abelian semigroup.
\item If $\{i, j \} \subseteq \N$, $\{k, l \} \subseteq \N^*$ and
\begin{eqnarray*}
I_2 (i, k) & \subseteq & G,\\
I_2 (j, l) & \subseteq & G,
\end{eqnarray*}
then 
\begin{eqnarray*}
I_2 (i, k) \cdot I_2 (j, l) \subseteq  G.
\end{eqnarray*}
\end{enumerate}
\end{lemma}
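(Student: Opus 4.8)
The plan is to run the argument of Lemma~\ref{prel_b} essentially verbatim, pausing only at the one spot where $b>2$ was used and checking that the modified multiplication on $U_2$ is tailored precisely to that spot.

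For part (1), I would first note that $(i,k)\mapsto I_2(i,k)$ is injective ($\min I_2(i,k)=2^i$ recovers $i$, and then $|I_2(i,k)|$ recovers $k$), so the operation is well defined on $U_2$. The element $I_2(0,1)=[1,2[_\N=\{1\}$ is a two-sided identity by the first two clauses; on the complement $S=\{I_2(i,k):i+k\geq 2\}$ the third clause is just componentwise addition of the pairs $(i,k)$, and since $k,l\in\N^*$ gives $k+l\geq 2$ the set $S$ is closed under it, hence is a subsemigroup of $(\N,+)\times(\N^*,+)$ and in particular abelian. Adjoining the identity $\{1\}$ preserves associativity and commutativity, so $U_2$ is an abelian (unital) semigroup.

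For part (2), if $(i,k)=(0,1)$ then $I_2(i,k)\cdot I_2(j,l)=I_2(j,l)\subseteq G$ by hypothesis, and symmetrically if $(j,l)=(0,1)$; so I may assume $(i,k)\neq(0,1)$ and $(j,l)\neq(0,1)$, which with $k,l\geq 1$ forces $i+k\geq 2$ and $j+l\geq 2$, and the goal becomes $I_2(i+j,k+l)\subseteq G$. Exactly as in Lemma~\ref{prel_b}, the products $2^{i+\tilde k}\cdot 2^{j+\tilde l}=2^{i+j+\tilde k+\tilde l}$ ($\tilde k\in[0,k[_\N$, $\tilde l\in[0,l[_\N$) lie in $G$ and run over all powers $2^{i+j+h}$ with $h\in[0,k+l-1[_\N$, giving $I_2(i+j,k+l-1)\subseteq G$. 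It remains to put one integer with $i+j+k+l$ binary digits into $G$. The base-$2$ replacement for the midpoint of Lemma~\ref{prel_b} is $c_{ik}=\frac{2^{i+k-1}+2^{i+k}}{2}=3\cdot 2^{i+k-2}$, an integer precisely because $i+k\geq 2$, with $2^{i+k-1}\leq c_{ik}<2^{i+k}$, so $c_{ik}\in I_2(i,k)\subseteq G$; likewise $c_{jl}=3\cdot 2^{j+l-2}\in I_2(j,l)\subseteq G$. Then $c_{ik}c_{jl}=9\cdot 2^{i+j+k+l-4}\in G$ and $2^{i+j+k+l-1}=8\cdot 2^{i+j+k+l-4}\leq c_{ik}c_{jl}<16\cdot 2^{i+j+k+l-4}=2^{i+j+k+l}$, so digit-closure of $G$ yields $I_2(i+j+k+l-1,1)=[2^{i+j+k+l-1},2^{i+j+k+l}[_\N\subseteq G$; together with the previous step, $I_2(i+j,k+l)=I_2(i+j,k+l-1)\cup I_2(i+j+k+l-1,1)\subseteq G$.

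The only delicate point is the bookkeeping around the $b=2$ anomaly: the midpoint $c_{ik}$ ceases to be an integer exactly when $i+k=1$, i.e. when $I_2(i,k)=\{1\}$, which is exactly the case the altered operation on $U_2$ removes by declaring $\{1\}\cdot X=X$ --- forced and harmless since $1$ is the multiplicative identity of $\N^*$. So, unlike Lemma~\ref{prel_b}, no even/odd case split is needed; one only verifies that $(i,k)\neq(0,1)$ with $k\geq 1$ really gives $i+k\geq 2$, which is immediate.
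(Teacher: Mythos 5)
Your proposal is correct and follows essentially the same route as the paper: part (1) rests on observing that $I_2(0,1)=\{1\}$ acts as an identity while the remaining elements multiply by componentwise addition of the pairs $(i,k)$, and part (2) dismisses the $(0,1)$ factors trivially and then reruns the Lemma~\ref{prel_b}(2) argument with $b=2$, where your explicit computation $c_{ik}=3\cdot 2^{i+k-2}$ and $c_{ik}c_{jl}=9\cdot 2^{i+j+k+l-4}\in[2^{i+j+k+l-1},2^{i+j+k+l}[_{\N}$ is exactly the midpoint argument the paper invokes by reference. No gaps; your version merely spells out the details the paper leaves to the analogy.
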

\begin{proof}
\begin{enumerate}[leftmargin=*]
\item Let $\{i_1, i_2, i_3 \} \subseteq \N$ and $\{k_1, k_2, k_3 \} \subseteq \N^*$. 

If $i_1 = 0$ and $k_1 = 1$, then
\begin{eqnarray*}
(I_2 (0, 1) \cdot I_2 (i_2, k_2) ) \cdot I_2 (i_3, k_3) & = & I_2(i_2, k_2) \cdot I_2 (i_3, k_3).
\end{eqnarray*}
Now we notice that $I_2(i_2, k_2) \cdot I_2 (i_3, k_3) = I_2 (i, j)$ for some $i \in \N$ and $j \in \N^*$. Therefore, 
\begin{eqnarray*}
I_2(i_2, k_2) \cdot I_2 (i_3, k_3) &  = & I_2 (i, j) = I_2(0,1) \cdot I_2 (i, j) \\
&  = & I_2(0,1) \cdot (I_2(i_2, k_2) \cdot I_2 (i_3, k_3)).
\end{eqnarray*}
In a similar way we can prove that the associativity of $\cdot$ holds if $i_2= 0$ and $k_2 = 1$ or $i_3 = 0$ and $k_3=1$.

Now we notice that, if $a \in \N$ and $b \in \N^*$, then
\begin{equation*}
I_2 (0,1) \cdot I_2(a, b) = I_2 (a, b) = I_2 (a, b) \cdot I_2 (0,1).
\end{equation*}
Hence, the operation $\cdot$ is commutative if $i=0$ and $k=1$ or $j=0$ and $l=1$. 

In the remaining cases, both the associativity and the commutativity of the operation $\cdot$ follow in analogy with Lemma \ref{prel_b}(1).
\item If $i=0$ and $k=1$ or $j=0$ and $l=1$ we get trivially the result. 

In the remaining cases, $i+k \geq 2$ and $j+l \geq 2$. Therefore, we can prove that $I_2(i+j, k+l) \subseteq G$ as in the proof of Lemma \ref{prel_b}(2), replacing each occurrence of $b$ with $2$.
\end{enumerate}
\end{proof}

Finally, we prove a technical lemma.
\begin{lemma}\label{l_struc}
Let $G$ be a $b$-dc-semigroup and $I_b(j,l) \subseteq G$ for some $j \in \N$ and $l \in \N^*$. Define $d = \lceil \frac{j}{l} \rceil$.
\begin{enumerate}
\item If $d \geq 1$, then $I_b(dj, + \infty) \subseteq G$.
\item If $d = 0$, $l \geq 2$ and $b =2$, then $G = \N^*$. 
\item If $d = 0$ and $b > 2$, then $G= \N^*$.
\end{enumerate}
\end{lemma}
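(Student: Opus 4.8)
The idea is that once $I_b(j,l)\subseteq G$, Lemma \ref{prel_b}(2) (or Lemma \ref{prel_2}(2) when $b=2$) lets us multiply this interval by itself repeatedly: for every $n\in\N^*$ we get $I_b(nj,nl)\subseteq G$ (using the semigroup operation on $U_b$). So $G$ contains intervals $[b^{nj},b^{nj+nl}[_{\N}$ of exponent-length $nl$, starting at exponent $nj$. To cover $I_b(m,+\infty)$ for a suitable $m$ it is enough to show that consecutive such intervals overlap or abut, i.e. that the starting exponent of the $(n{+}1)$-st block is at most the ending exponent of the $n$-th block: $(n{+}1)j\le nj+nl$, equivalently $j\le nl$. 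Since $n$ ranges over all of $\N^*$, for $n\ge \lceil j/l\rceil = d$ this inequality holds, so the blocks $I_b(nj,nl)$ for $n\ge d$ chain together without gaps and their union is $[b^{dj},+\infty[_{\N} = I_b(dj,+\infty)$. This proves part (1) when $d\ge 1$; the only thing to double-check is the $b=2$ bookkeeping, where the operation on $U_2$ degenerates when $i=0,k=1$ — but if $d\ge 1$ then $j\ge 1$ (indeed $j\ge l\cdot d - (l-1)\ge 1$, or more simply $d=\lceil j/l\rceil\ge 1$ forces $j\ge 1$), so $I_b(j,l)$ is never the "unit" interval $I_2(0,1)$ and the degenerate case never arises.

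For part (3), $d=0$ means $j=0$, so $I_b(0,l)\subseteq G$, i.e. $[1,b^l[_{\N}\subseteq G$; in particular $1,2,\dots\in G$ up to $b^l-1$. Multiplying $I_b(0,l)$ by itself $n$ times gives $I_b(0,nl)=[1,b^{nl}[_{\N}\subseteq G$ for every $n$ (here, crucially, the left endpoint of the product is again $0$, so the blocks are nested, not merely overlapping), and $\bigcup_{n\ge 1}[1,b^{nl}[_{\N}=\N^*$. So $G=\N^*$. Again one checks the $b=2$ case separately: if $l\ge 2$ then $I_2(0,l)$ is not the unit interval, so the self-products $I_2(0,nl)$ behave as in Lemma \ref{prel_2} and we again get all of $\N^*$ — this is exactly part (2). (If $b=2$ and $l=1$ then $I_2(0,1)$ is the unit of $U_2$ and no contradiction is forced, which is why part (2) needs the hypothesis $l\ge 2$ and there is no "$d=0,\ b=2,\ l=1$" conclusion.)

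The main point requiring care — and the only genuine obstacle — is making the overlap argument airtime-precise: one must verify that $[b^{nj},b^{nj+nl}[_{\N}$ and $[b^{(n+1)j},b^{(n+1)j+(n+1)l}[_{\N}$ really do cover all integers in $[b^{nj},b^{(n+1)j+(n+1)l}[_{\N}$ once $(n+1)j\le nj+nl$, which is immediate since the first interval ends at $b^{nj+nl}$ and the second starts at $b^{(n+1)j}\le b^{nj+nl}$, and then induct on $n\ge d$ to get $[b^{dj},b^{nj+nl}[_{\N}\subseteq G$ for all $n\ge d$, whose union over $n$ is $[b^{dj},+\infty[_{\N}$ because $nj+nl\to\infty$. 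Everything else is a direct application of the two preliminary lemmas together with the closure of $G$ under number of digits.
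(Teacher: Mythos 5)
Your proof is correct and follows essentially the same route as the paper: iterated application of Lemma \ref{prel_b}(2) (resp.\ Lemma \ref{prel_2}(2), with the observation that $d\ge 1$ forces $j\ge 1$ so the degenerate unit $I_2(0,1)$ never occurs) gives $I_b(nj,nl)\subseteq G$, and your overlap condition $j\le nl$ for $n\ge d$ is exactly the inequality the paper uses (there phrased via the division $y=jq+r$ and $r+1\le j\le dl\le ql$) to cover $I_b(dj,+\infty)$. The only minor deviation is in parts (2) and (3), where you take powers of $I_b(0,l)$ directly instead of the paper's bookkeeping (extracting $I_2(1,1)$ and invoking part (1), resp.\ powering $I_b(0,1)$); the two are equivalent.
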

\begin{proof}
We prove separately the claims.
\begin{enumerate}[leftmargin=*]
\item Let $x \in I_b(dj, + \infty)$ and $y = \lfloor \log_b(x) \rfloor$.

Suppose that
\begin{displaymath}
\begin{cases}
y = j q + r\\
0 \leq r < j
\end{cases}
\end{displaymath}
for some integers $r$ and $q$. 

Since $y \geq dj$, we have that $q \geq d$. All considered,
\begin{equation*}
x \in [b^{jq}, b^{jq+r+1}[_{\N} \subseteq I_b^q (j, l) =  [b^{jq}, b^{jq+ql}[_{\N} \subseteq G.
\end{equation*}
In fact,
\begin{equation*}
r+1 \leq j = \frac{j}{l} \cdot l \leq \left\lceil \frac{j}{l} \right\rceil \cdot l = dl \leq ql.
\end{equation*}

\item Since $d=0$, we have that $j = 0$ and $1 \in G$. Moreover, $[2^1, 2^2[_{\N} \subseteq I_2 (j, l)$. Therefore, $I_2(1,1) \subseteq G$. From (1) we deduce that $I_2(1, + \infty) \subseteq G$. Therefore $\{ 1 \} \cup [2, + \infty[_{\N} \subseteq G$, namely $G= \N^*$.

\item The claim holds since $I_b(0,1) \subseteq G$ and, if $x \in \N^* \backslash \{ 1 \}$ and $y =\lfloor \log_b(x) \rfloor$, then
\begin{equation*}
x \in [1, b^{y+1}[_{\N} = I_b^{y+1} (0,1) \subseteq G.
\end{equation*}
\end{enumerate}
\end{proof}

\section{The smallest $b$-dc-semigroup containing a set of positive integers}\label{sec_scpi}
In this  section we would like to address and generalize, through an alternative approach, the question raised in \cite{ros}, namely the problem of finding the smallest $b$-dc-semigroup containing a set of positive integers.

Consider a set $X \subseteq \N^*$, a base $b$ and the set
\begin{equation*}
J = \{\lfloor \log_b(x) \rfloor : x \in X \}.
\end{equation*} 

The semigroup
\begin{equation*}
G = \langle I_b (j, 1) : j \in J \rangle \subseteq U_b
\end{equation*}  
is the smallest $b$-dc-semigroup containing $X$. 

We analyse the possible cases more in detail, based upon the value of $j_0$, where 
\begin{equation*}
j_0 := \min \{j: j \in J \}.
\end{equation*}
\begin{itemize}     
\item \emph{Case 1:} $j_0 > 0$. Suppose that $[j_0, j_0+l_0[_{\N} \subseteq J$, for some positive integer $l_0$, namely $I_b(j_0, l_0) \subseteq G$. 

Let $d:=\lceil \frac{j_0}{l_0} \rceil$. 

\begin{itemize}
\item If $d=1$, then 
\begin{equation*}
I_b (j_0, + \infty) \subseteq G
\end{equation*}
in accordance with Lemma \ref{l_struc}(1). Moreover, since $\lfloor \log_b(x) \rfloor \geq j_0$ for any $x \in G$, we can conclude that $G = I_b (j_0, + \infty)$.

\item If $d > 1$, then
\begin{equation*}
I_b (d j_0, + \infty) \subseteq G
\end{equation*}
in accordance with Lemma \ref{l_struc}(1).

If we define $t:= d j_0$, then we can write 
\begin{equation*}
J \backslash [t, + \infty[_{\N} = \bigcup_{k=0}^n [j_k,j_k+l_k[_{\N},
\end{equation*}
for some non-negative integer $n$ and some positive integers $j_k$ and $l_k$ indexed on $K=[0,n+1[_{\N}$ such that
\begin{equation*}
j_{k+1} > j_k+l_k
\end{equation*}
for any $k \in K \backslash \{ n \}$.
All considered, $G$ is the union of $I_b(t, + \infty)$ with  all  possible products  
\begin{equation*}
\prod_{i=1 \dots e \atop j_{k_1} \leq \dots \leq j_{k_{e}}} I_b (j_{k_i}, l_{k_i})
\end{equation*} 
for any integer $e$ such that $1 \leq e < d$. Indeed, for any $e \geq d$ the products we obtain are contained in $I_b (t, + \infty)$.
\end{itemize}

\item \emph{Case 2:} $j_0 = 0$ and $b=2$. If $X = \{ 1 \}$, then $G = \{ 1 \}$. On the contrary, we define
\begin{equation*}
J^* := J \backslash \{ 0 \} 
\end{equation*}
and apply case 1 setting $J:=J^*$. The union of $\{ 1 \}$ with the semigroup we find applying the procedure described in case 1 is the smallest $2$-dc-semigroup containing $X$. 

\item \emph{Case 3:} $j_0 = 0$ and $b > 2$. In this case $G = \N^*$, according to Lemma \ref{l_struc}(3).  
\end{itemize}

\begin{example}\label{exm_10}
In \cite[Example 20]{ros} the authors dealt with the construction of the smallest $10$-semigroup $G$ containing $X = \{1235, 54321 \}$. We deal with the same problem proceeding as explained above.

First, we notice that $J = \{ 3, 4 \}$. Adopting the notations of case 1, we have that $j_0 = 3$ and $l_0=2$. Consequently, $d=2$. 

All considered, $I_{10} (6, + \infty) \subseteq G$ and $G = I_{10} (3,5) \cup I_{10} (6, + \infty)$.
\end{example}
\begin{remark}
We notice that if $b$ is an integer not smaller than $2$ and $X$ is a set formed by some positive integers, whose base-$b$ representations have length $4$ and $5$ respectively, then the smallest $b$-dc-semigroup $G$ containing $X$ is $I_{b} (3,5) \cup I_{b} (6, + \infty)$.

Indeed, it suffices to replace all the occurrences of $10$ with $b$ in Example \ref{exm_10} in order to get the result. 
\end{remark}
\bibliography{Refs}
\end{document}